\newtheorem{theorem}{Theorem}
  \newtheorem{lemma}{Lemma}
  \newtheorem{proposition}{Proposition}
\newcommand{\R}{{\mathbb R}}
\newcommand{\Z}{{\mathbb Z}}
\newcommand{\Q}{{\mathbb Q}}
\newcommand{\C}{{\mathbb C}}
\newcommand{\OO}{\mathcal{O}}
\DeclareMathOperator{\ho}{H} 
\DeclareMathOperator{\mo}{M} 
\DeclareMathOperator{\res}{Res} 
\DeclareMathOperator{\sep}{sep}
\DeclareMathOperator{\abssep}{abs\,sep}
\begin{document}

\title{{\Large{\bf Absolute real root separation}}}

\author{Yann Bugeaud, Andrej Dujella, \\ Tomislav Pejkovi\'{c}, and Bruno Salvy}

\date{}
\maketitle

\begin{abstract}
While the separation (the minimal nonzero distance) between roots of a polynomial is a classical topic, its absolute counterpart (the minimal nonzero distance between their absolute values) does not seem to have been studied much. We present the general context and give tight bounds for the case of real roots.
\end{abstract}




\section{Separation and absolute separation}
The polynomials $14x^3+17x^2-13x+2 $ and $17x^3-9x^2-7x+8$ hold records in the set of polynomials with integer coefficients in $\{-20,\dots,20\}$ and degree at most~3. The first one has two roots $\alpha_1,\alpha_2$ with
\[0<|\alpha_1-\alpha_2|<0.005, \]    
while the second one has two roots $\beta_1,\beta_2$ with
\[0<\big||\beta_1|-|\beta_2|\big|<0.000015.\]
Apart from those obtained by multiplying these polynomials by $-1$ or changing $x$ into $-x$ in them, no other polynomial in that set has roots satisfying any of these inequalities. 
More generally, we are interested in understanding how close can the nonzero difference between two roots (or the absolute values of two roots) of a polynomial with integer coefficients be in terms of its degree and a bound on its coefficients. 

The first quantity is classically called the \emph{root separation} of a polynomial~$P$ with at least two distinct roots and denoted by $\sep(P)$:    
\[ \sep(P):=\min_{\substack{P(\alpha)=P(\beta)=0,\\ \alpha\neq\beta}}|\alpha-\beta|. \]
By analogy, we define the \emph{absolute root separation} by
\[ \abssep(P):=\min_{\substack{P(\alpha)=P(\beta)=0,\\ |\alpha|\neq|\beta|}}\big||\alpha|-|\beta|\big|. \]
This quantity arises in the computation of the asymptotic behavior of sequences satisfying linear recurrence equations, where it governs the precision required in computing the roots of the characteristic polynomial.

The \emph{height} of a polynomial~$P$, denoted $\ho(P)$, is the maximum of the absolute values of its coefficients. In 1964, Mahler proved the following lower bound relating separation, degree and height~\cite{M64,B04}.
\begin{proposition}[Mahler, 1964] 
If $\alpha, \beta$ are two roots of a separable polynomial (i.e., with no multiple roots) of degree~$d\ge2$ with integer coefficients, then
\begin{equation} \label{eq:mahler}
|\alpha-\beta|> \sqrt{3} (d+1)^{-(2d+1)/2} \max\{1,|\alpha|,|\beta|\} \ho(P)^{-d+1}.
\end{equation}
\end{proposition}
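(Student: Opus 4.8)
The plan is to derive the bound from the integrality of the discriminant. I would write $P(x)=a_d\prod_{i=1}^{d}(x-\alpha_i)$ with $\alpha_1=\alpha$, $\alpha_2=\beta$, and recall that, since $P$ is separable with integer coefficients, its discriminant $\disc(P)=a_d^{2d-2}\prod_{i<j}(\alpha_i-\alpha_j)^2$ is a nonzero rational integer, so that $|\disc(P)|\ge 1$. Isolating the factor attached to the chosen pair gives
\[
|\alpha-\beta|^2\ge\frac{1}{|a_d|^{2d-2}\prod_{\{i,j\}\neq\{1,2\}}|\alpha_i-\alpha_j|^2},
\]
so that everything reduces to an upper bound for the product of the remaining root differences.

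The key step is to estimate $\prod_{\{i,j\}\neq\{1,2\}}|\alpha_i-\alpha_j|$ through a determinant. Starting from the Vandermonde identity $\prod_{i<j}(\alpha_j-\alpha_i)=\det(\alpha_i^{\,j-1})_{1\le i,j\le d}$, and assuming without loss of generality that $|\alpha|\le|\beta|$, I would subtract the $\alpha$-row from the $\beta$-row. Since $\alpha_2^{k}-\alpha_1^{k}=(\alpha_2-\alpha_1)\,h_{k-1}(\alpha,\beta)$ with $h_k(\alpha,\beta)=\sum_{a+b=k}\alpha^{a}\beta^{b}$, this operation factors out $(\alpha-\beta)$ and leaves a determinant whose absolute value is exactly $\prod_{\{i,j\}\neq\{1,2\}}|\alpha_i-\alpha_j|$, the modified row being filled by the complete homogeneous symmetric polynomials $h_k(\alpha,\beta)$. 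Hadamard's inequality then bounds this product by the product of the Euclidean norms of the rows: each of the $d-1$ unmodified rows contributes $\big(\sum_{k=0}^{d-1}|\alpha_i|^{2k}\big)^{1/2}\le\sqrt{d}\,\max\{1,|\alpha_i|\}^{d-1}$, while the modified row contributes $\big(\sum_{k=0}^{d-2}|h_k(\alpha,\beta)|^2\big)^{1/2}$.

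Collecting the unmodified rows reconstructs the Mahler measure $\mo(P)=|a_d|\prod_i\max\{1,|\alpha_i|\}$, and because the $\beta$-row (the one attached to the larger modulus) has been pulled out of this product while the modified row only restores a power $2(d-2)$ of $\max\{1,|\beta|\}$, a net factor $\max\{1,|\alpha|,|\beta|\}^{-2}$ survives in the denominator; this is precisely the origin of the $\max\{1,|\alpha|,|\beta|\}$ on the right-hand side of~\eqref{eq:mahler}. I would then pass from the Mahler measure to the height by Landau's inequality $\mo(P)\le\sqrt{d+1}\,\ho(P)$ and assemble the pieces into a lower bound for $|\alpha-\beta|^2$ of the shape $c(d)\,\max\{1,|\alpha|,|\beta|\}^2\,\ho(P)^{-2d+2}$.

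The main obstacle is quantitative rather than conceptual: one has to bound the norm of the modified row, controlled by $|h_k(\alpha,\beta)|\le(k+1)\max\{1,|\alpha|,|\beta|\}^{k}$ and hence by $\sum_{k=0}^{d-2}(k+1)^2=\tfrac16(d-1)d(2d-1)$, and then verify that the accumulated constants $d^{d/2}$ from the unmodified rows, this sum from the modified row, and the $(d+1)^{(d-1)/2}$ coming from Landau's inequality combine so as to beat $\sqrt{3}\,(d+1)^{-(2d+1)/2}$. A direct comparison of the two constants reduces to the elementary inequality $2(d+1)^{d+2}\ge(d-1)(2d-1)\,d^{d}$, which holds for every $d\ge 2$; the slack in this comparison is exactly what yields the strict inequality in~\eqref{eq:mahler}.
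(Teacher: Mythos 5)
Your proposal is correct and follows essentially the same route as the proof the paper relies on: the paper does not reprove this proposition but cites \cite{M64,B04}, and Mahler's argument is precisely your combination of $|\disc(P)|\ge 1$, the row-subtracted Vandermonde determinant estimated by Hadamard's inequality (with the row of larger modulus modified, whence the factor $\max\{1,|\alpha|,|\beta|\}$), and Landau's inequality $\mo(P)\le\sqrt{d+1}\,\ho(P)$ --- the same machinery that Section~\ref{sec2} of the paper transposes to $\res(P(x),P(-x))$ to handle $|\alpha+\beta|$. One slip to fix in your constant bookkeeping: the $d-1$ unmodified rows contribute $d^{(d-1)/2}$, not $d^{d/2}$ (taken literally, $d^{d/2}$ would turn your final comparison into $2(d+1)^{d+2}\ge(d-1)(2d-1)d^{d+1}$, which fails for $d\ge5$), whereas the inequality you actually display, $2(d+1)^{d+2}\ge(d-1)(2d-1)d^{d}$, is the one matching the correct row count and does hold (strictly) for all $d\ge2$, which is what yields the strict inequality in~\eqref{eq:mahler}.
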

This implies in that case
\begin{equation}\label{eq:mahler2}
\sep(P)\gg \ho(P)^{-d+1},
\end{equation}
where, here and below, the constant implicit in the `$\gg$' sign depends   
only on the degree~$d$. 
The tightness of the exponent $-d+1$ of $\ho(P)$ in this inequality is still unknown. The best known upper bound on this exponent is $-(2d-1)/3$ for general~$d$ (see \cite{BD14}) and~$-2$ for $d=3$  \cite{E04,S06}, which is the only case where we thus know the bound of~\eqref{eq:mahler2} to be optimal. (Restricting further the set of polynomials under consideration so that only monic or irreducible polynomials enter the contest leads to even larger upper bounds \cite{BM10,BD11,BD14,DP11}.)

Concerning the absolute separation, not much appears to be known. A consequence of Mahler's result \eqref{eq:mahler} yields the following \cite{GS96}:
\begin{equation} \label{eq:gourdonsalvy} 
  \abssep(P)\gg \ho(P)^{-d(d^2+2d-1)/2}.
\end{equation}
The exponent of $\ho(P)$ does not seem to be the best possible, but no improvement in this general case seems to be known. However, restricting to the absolute root separation of two \emph{real} roots, our main result completely solves the problem:
\begin{theorem} \label{thm:bound} 
Let $\alpha_1=\alpha,\alpha_2=\beta,\alpha_3,\ldots,\alpha_d\in\C$ be the roots of a separable polynomial $P(x)\in\Z[x]$ of degree $d$ such that $\alpha_i+\alpha_j\neq 0$ for any $i,j\in\{1,\ldots,d\}$. If $\alpha$ and $\beta$ are real then
\begin{equation} \label{eq:bound}
  \big||\alpha|-|\beta|\big|\geq 2^{(-d^2+2)/2}(d+1)^{(-d+1)/2}\ho(P)^{-d+1}. 
\end{equation}
Moreover, the exponent of $\ho(P)$ is best possible.
\end{theorem}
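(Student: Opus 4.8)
The plan is to exploit the elementary fact that, for real $\alpha,\beta$, the quantity $\big||\alpha|-|\beta|\big|$ equals $|\alpha-\beta|$ when $\alpha,\beta$ have the same sign and $|\alpha+\beta|$ when they have opposite signs. It therefore suffices to bound \emph{both} $|\alpha-\beta|$ and $|\alpha+\beta|$ from below by $c_d\,M(P)^{-(d-1)}$, where $M(P)=|c|\prod_i\max(1,|\alpha_i|)$ is the Mahler measure and $c$ the leading coefficient; the stated inequality then follows from $M(P)\le\sqrt{d+1}\,\ho(P)$. Throughout I write $P(x)=c\prod_{i=1}^d(x-\alpha_i)$ and $m_i=\max(1,|\alpha_i|)$.

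Both bounds will come from one device: producing a nonzero rational integer having the square of the quantity of interest as an explicit factor. For the difference this integer is the discriminant
\[\disc(P)=c^{2d-2}\prod_{i<j}(\alpha_i-\alpha_j)^2\in\Z\setminus\{0\},\]
nonzero since $P$ is separable. For the sum I would introduce its analogue
\[\Sigma(P):=c^{2d-2}\prod_{i<j}(\alpha_i+\alpha_j)^2,\]
which is nonzero exactly because of the hypothesis $\alpha_i+\alpha_j\neq0$. In each case one isolates the factor indexed by $(1,2)$ — namely $(\alpha-\beta)^2$, respectively $(\alpha+\beta)^2$ — writes $|\disc(P)|\ge1$ (respectively $\Sigma(P)\ge1$), and bounds the remaining $\binom d2-1$ factors via $|\alpha_i\pm\alpha_j|\le2\max(m_i,m_j)$. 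Using the estimate $\prod_{i<j}\max(m_i,m_j)\le\big(\prod_k m_k\big)^{d-1}$ together with $c^{2d-2}\big(\prod_k m_k\big)^{2(d-1)}=M(P)^{2(d-1)}$, the powers of $c$ cancel cleanly and one gets $|\alpha\pm\beta|\ge c_d\,M(P)^{-(d-1)}$ with an explicit constant; elementary bookkeeping then yields \eqref{eq:bound}. (For the difference one may instead simply quote Mahler's Proposition, which is even stronger.)

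The crux — and the only genuine obstacle — is to justify that $\Sigma(P)$ is an integer, i.e. that
\[c^{d-1}\prod_{i<j}(\alpha_i+\alpha_j)\in\Z.\]
This quantity is symmetric in the roots, hence rational; the essential point is the \emph{sharp} power $c^{d-1}$, which is what produces the exponent $-(d-1)$ rather than the $-d$ coming from the cruder bound $|\res(P(x),P(-x))|\ge1$. I would argue prime by prime. Fix $p$, set $e=v_p(c)$, and note that each $c\alpha_i$ is an algebraic integer, so every root valuation satisfies $v_p(\alpha_i)\ge-e$. Ordering the roots so that $v_p(\alpha_1)\le\cdots\le v_p(\alpha_d)$ and using the ultrametric inequality gives
\[v_p\Big(\prod_{i<j}(\alpha_i+\alpha_j)\Big)\ge\sum_{i<j}\min\big(v_p(\alpha_i),v_p(\alpha_j)\big)=\sum_{m=1}^{d-1}\Lambda_m,\qquad\Lambda_m:=\sum_{i=1}^{m}v_p(\alpha_i).\]
Now the Newton polygon of $P$ at $p$ joins $(0,v_p(a_0))$ to $(d,v_p(c))$ through points of height $\ge0$, hence lies above the horizontal axis; reading the $m$ smallest root valuations off its rightmost part yields $\Lambda_m\ge-e$ for every $m$. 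Therefore $\sum_{m=1}^{d-1}\Lambda_m\ge-(d-1)e=-(d-1)v_p(c)$, which is precisely $v_p\big(c^{d-1}\prod_{i<j}(\alpha_i+\alpha_j)\big)\ge0$. As this holds for all $p$, the rational number is an integer, nonzero by hypothesis.

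For optimality of the exponent one exhibits, for each $d$, an explicit family of polynomials possessing two opposite-sign real roots whose sum has order $\ho(P)^{-(d-1)}$ (so that $\alpha\approx-\beta$): running the above estimates in reverse shows that $-(d-1)$ cannot be lowered. I expect the Newton-polygon integrality statement to be the delicate step, everything else being routine estimation with Mahler measures.
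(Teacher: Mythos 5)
Your proof of the inequality \eqref{eq:bound} is correct, and it replaces the paper's key device with a genuinely different lemma. The paper works with the resultant $r=\res(P(x),P(-x))$: factoring $a_d$ out of the first column and $a_0$ out of the last column of the Sylvester determinant shows $a_0a_d$ divides $r$, so $|r|\ge |a_0a_d|$; Poisson's formula $r=2^d a_0a_d^{2d-1}\prod_{i<j}(\alpha_i+\alpha_j)^2$, the bound $|\alpha_i+\alpha_j|\le 2\max\{1,|\alpha_i|\}\max\{1,|\alpha_j|\}$ and Landau's inequality then finish the argument. You instead prove directly that $a_d^{d-1}\prod_{i<j}(\alpha_i+\alpha_j)$ is a nonzero rational integer, via Galois symmetry plus a prime-by-prime Newton polygon estimate. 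That argument is sound: with the roots ordered by increasing valuation, $\Lambda_m$ equals the height of the polygon at abscissa $d-m$ minus $v_p(a_d)$, and the polygon has nonnegative height because the coefficients are integers, so indeed $\Lambda_m\ge -v_p(a_d)$ for every $m$. (The same integrality also follows from the classical lemma that $c^k F(\alpha_1,\dots,\alpha_d)\in\Z$ whenever $F\in\Z[x_1,\dots,x_d]$ is symmetric of degree at most $k$ in each variable, applied with $k=d-1$.) You correctly isolate the sharp power of $a_d$ as the step that yields $-d+1$ rather than $-d$. Your route is even marginally sharper: it avoids the factor $2^d$ sitting inside $r$, giving the constant $2^{(-d^2+d+2)/2}$ in place of $2^{(-d^2+2)/2}$, and your uniform discriminant treatment of $|\alpha-\beta|$ has the side benefit of actually delivering the constant of \eqref{eq:bound} in the same-sign case, whereas quoting Mahler's Proposition verbatim (as both you and the paper suggest) gives a constant smaller than $2^{(-d^2+2)/2}(d+1)^{(-d+1)/2}$ for $d\in\{2,3\}$ --- harmless for the exponent, but worth noticing.

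However, the theorem also asserts that the exponent of $\ho(P)$ is best possible, and here your proposal has a genuine gap: ``running the above estimates in reverse'' is not an argument. The upper bounds used in the proof are generically far from tight, and nothing in them guarantees that integer polynomials with two real roots of opposite signs and $|\alpha+\beta|\ll\ho(P)^{-d+1}$ actually exist. What is required --- and what the paper supplies in Theorem~\ref{thm:examples} --- is an explicit construction: perturbations of $Mx^2-1$, e.g.\ $x^3+Mx^2-1$ with real roots
\begin{equation*}
\pm M^{-1/2}-\tfrac12 M^{-2}+\OO(M^{-7/2}),
\end{equation*}
whose sum has order $M^{-2}=\ho(P)^{-d+1}$ for $d=3$, together with sign-change verifications that roots with these expansions exist, and separate families for each parity of $d\ge4$. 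This occupies all of Section~4 of the paper and cannot be dismissed as routine; as written, your proof establishes only the inequality half of the theorem.
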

In the next section, we give a proof of the first part of this theorem, which is a variant of Mahler's proof. 
Next, we prove that for reducible polynomials, the exponent can be increased to $-d+2$ (Theorem~\ref{thm:reducible} below).
Finally, we give explicit families of polynomials reaching the bound $-d+1$. 
We show that, 
furthermore, the bound is still tight when restricting to monic polynomials of arbitrary degree $d\ge3$.

\section{Bounds from resultants}\label{sec2}

Since Mahler's result \eqref{eq:mahler} implies \eqref{eq:bound} if $\alpha$ and $\beta$ have the same sign, we need to prove a comparable result for $|\alpha+\beta|$ as well. Our proof, like Mahler's, relies on the use of a resultant. Recall that the resultant~$\res(P,Q)$ of two univariate polynomials~$P$ and~$Q$ of degree~$d_1$ and~$d_2$ is the determinant of their Sylvester matrix, which is the transposed matrix of the map $(U,V)\mapsto UP+VQ$ in the bases $((1,0),(x,0),\dots,(x^{d_2-1},0),(0,1),(0,x),\dots,(0,x^{d_1-1}))$ and $(1,x,\dots,x^{d_1+d_2-1})$. We refer to the literature~(e.g., \cite{L02,M92}) for the definition and properties of resultants and recall only the following.
\begin{lemma}[Poisson's formula] 
The resultant of the polynomials\\ $P=a_{d_1}\prod_{i=1}^{d_1}(x-\alpha_i)$ and      
$Q=b_{d_2}\prod_{i=1}^{d_2}(x-\beta_i)$ satisfies     
\[\res(P,Q)=a_{d_1}^{d_2}b_{d_2}^{d_1}\prod_{i,j}(\alpha_i-\beta_j).\]
\end{lemma}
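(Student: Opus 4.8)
The plan is to derive the formula straight from the definition of $\res(P,Q)$ as $\det S$, where $S$ is the Sylvester matrix, by factoring $S$ against a single Vandermonde matrix built from all the roots at once. Both sides of the claimed identity are polynomials in the leading coefficients $a_{d_1},b_{d_2}$ and in the roots $\alpha_1,\dots,\alpha_{d_1},\beta_1,\dots,\beta_{d_2}$ (the coefficients of $P$ and $Q$ are themselves such polynomials), so it suffices to prove it on the dense set where these $d_1+d_2$ roots are pairwise distinct; the general case follows since two polynomials agreeing on a dense set coincide. I would therefore assume all roots distinct from the outset, set $\gamma_\ell=\alpha_\ell$ for $1\le\ell\le d_1$ and $\gamma_{d_1+j}=\beta_j$ for $1\le j\le d_2$, and introduce the Vandermonde matrix $V=(\gamma_\ell^{\,k-1})_{1\le k,\ell\le d_1+d_2}$, so that $\det V=\prod_{k<\ell}(\gamma_\ell-\gamma_k)$.

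By definition the rows of $S$ are the coefficient vectors of $P,xP,\dots,x^{d_2-1}P,Q,xQ,\dots,x^{d_1-1}Q$. The key observation is that the entry $(c,\ell)$ of the product $SV$ is exactly the $c$-th of these polynomials evaluated at $\gamma_\ell$; since each $\alpha_i$ annihilates $P$ and each $\beta_j$ annihilates $Q$, this forces the anti-block-triangular shape
\[ SV=\begin{pmatrix}0 & A\\ B & 0\end{pmatrix},\qquad A_{c,j}=\beta_j^{\,c-1}P(\beta_j),\quad B_{n,i}=\alpha_i^{\,n-1}Q(\alpha_i), \]
with $A$ of size $d_2$ and $B$ of size $d_1$. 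Pulling $P(\beta_j)$ out of the $j$-th column of $A$ and $Q(\alpha_i)$ out of the $i$-th column of $B$ turns the remaining matrices into Vandermonde matrices $W_\beta=\prod_{j<k}(\beta_k-\beta_j)$ and $W_\alpha=\prod_{i<i'}(\alpha_{i'}-\alpha_i)$, whence
\[ \det(SV)=(-1)^{d_1d_2}\,W_\alpha W_\beta\prod_{j}P(\beta_j)\prod_{i}Q(\alpha_i). \]

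On the other hand $\det V$ factors along the same groupings as $W_\alpha W_\beta\prod_{i,j}(\beta_j-\alpha_i)$. Equating $\det(SV)=\res(P,Q)\det V$ and cancelling the nonzero factor $W_\alpha W_\beta$ leaves
\[ \res(P,Q)\prod_{i,j}(\beta_j-\alpha_i)=(-1)^{d_1d_2}\prod_jP(\beta_j)\prod_iQ(\alpha_i). \]
Finally I would insert $P(\beta_j)=a_{d_1}\prod_i(\beta_j-\alpha_i)$ and $Q(\alpha_i)=b_{d_2}\prod_j(\alpha_i-\beta_j)$ to reintroduce the leading coefficients; after cancelling one copy of $\prod_{i,j}(\beta_j-\alpha_i)$ this collapses to $\res(P,Q)=\pm\,a_{d_1}^{d_2}b_{d_2}^{d_1}\prod_{i,j}(\alpha_i-\beta_j)$.

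The only genuinely delicate point, and the step I expect to be the main obstacle, is the overall sign. It is fed by three sources: the $(-1)^{d_1d_2}$ from the anti-block-triangular determinant, the parity hidden in rewriting $\prod_{i,j}(\beta_j-\alpha_i)$ as $\prod_{i,j}(\alpha_i-\beta_j)$, and the ordering conventions baked into the Sylvester matrix (row order, and whether coefficients are listed low- or high-degree first, which differ between sources). These contributions must be tracked consistently and shown to cancel so that the coefficient is exactly $a_{d_1}^{d_2}b_{d_2}^{d_1}$ with the product taken as $\prod_{i,j}(\alpha_i-\beta_j)$. The clean way to settle it is to evaluate both sides on the base case $P=x-\alpha$, $Q=x-\beta$, where the $2\times2$ Sylvester determinant fixes the convention once and for all; the general sign then follows from the homogeneity and multiplicative structure already visible in the computation. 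As an alternative route that sidesteps this bookkeeping, one can argue by divisibility: each $\alpha_i-\beta_j$ divides $\det S$ because a common root makes $S$ singular, a degree count in the roots shows the quotient is a constant multiple of $a_{d_1}^{d_2}b_{d_2}^{d_1}$, and that constant is again pinned down by one explicit specialization.
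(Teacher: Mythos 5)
The paper offers no proof of this lemma to compare against: it explicitly defers to the literature (\cite{L02,M92}) and only recalls the statement, so your argument must be judged on its own. What you give is the classical textbook proof --- multiply the Sylvester matrix by the Vandermonde matrix of all $d_1+d_2$ roots, read off the anti-block-triangular shape from $P(\alpha_i)=0$ and $Q(\beta_j)=0$, factor out the two partial Vandermonde determinants, and extend from the dense locus of pairwise distinct roots by polynomiality in $(a_{d_1},b_{d_2},\alpha_1,\dots,\beta_{d_2})$ --- and all of these steps are sound. The one soft spot is your first suggestion for settling the sign: checking the single $2\times2$ case $P=x-\alpha$, $Q=x-\beta$ does not by itself fix the constant, because the undetermined sign is a function $\epsilon(d_1,d_2)$ of \emph{both} degrees, and ``homogeneity and multiplicative structure'' would have to be made precise before it can propagate the value from $(1,1)$ to general $(d_1,d_2)$. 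Two rigorous repairs are available, both within reach of your setup: either track the sign sources explicitly --- the row and column reversals relating your ascending-basis matrix to the standard Sylvester matrix contribute $(-1)^{s}$ with $s=\binom{d_1+d_2}{2}+\binom{d_1}{2}+\binom{d_2}{2}\equiv d_1d_2 \pmod 2$, which exactly offsets the $(-1)^{d_1d_2}$ from the anti-diagonal block --- or use one explicit specialization \emph{per degree pair}, e.g.\ $P=x^{d_1}$, $Q=x^{d_2}-1$, which is precisely what your alternative divisibility route permits and is the cleaner way to finish. Your instinct that conventions are the real hazard is in fact vindicated by the paper itself: its stated definition (transpose of the map $(U,V)\mapsto UP+VQ$ in the \emph{ascending} monomial bases) yields a determinant equal to $(-1)^{d_1d_2}$ times that of the displayed matrix with descending coefficients, and only the latter satisfies Poisson's formula with the sign as stated --- a discrepancy that is harmless for the paper, which only ever uses $|r|$, but that your proof must (and, with the repairs above, does) resolve.
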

The special case when $Q=P'$ gives the discriminant (up to a simple constant), that was used by Mahler in his proof.

We thus obtain the products of pairwise sums of roots of $P(x)=a_d x^d+\dots+a_1 x+a_0$ by considering the resultant $r=\res(P(x),P(-x))$,
\[ r=
  \begin{vmatrix}
    a_d & \cdots & a_1 & a_0  &   &   \\
      & \ddots &   &   & \ddots &   \\
      &   &   a_d & a_{d-1} & \cdots & a_0 \\
    (-1)^{d} a_d & \cdots & -a_1 & a_0  &   &   \\
      & \ddots &   &   & \ddots &   \\
      &   &   (-1)^{d} a_d & (-1)^{d-1} a_{d-1} & \cdots & a_0 \\
  \end{vmatrix}.\]
Factoring out $a_d$ from the first column and $a_0$ from the last, we see that $r$ is an integer divisible by $a_0 a_d$. Thus, if $r\neq 0$, then we have 
\begin{equation} \label{eq:rlower}
  |r|\geq |a_0 a_d|.
\end{equation}
On the other hand, Poisson's formula gives 
\begin{equation*} \label{eq:rproduct}
\begin{aligned}
r 
  &=(-1)^d a_{d}^{2d}  \prod_{1\leq i\leq d}(2\alpha_i) \, \prod_{1\leq i< j\leq d} (\alpha_i+\alpha_j)^2,\\
  &=a_0a_{d}^{2d-1}  2^d \prod_{1\leq i< j\leq d} (\alpha_i+\alpha_j)^2.
\end{aligned}
\end{equation*}
since $\prod_{i=1}^{d} \alpha_i= (-1)^d a_0/a_d$.

Bounds will follow from the simple upper bound 
\begin{equation*} \label{eq:tworoots}
  |\alpha_i+\alpha_j|\leq 2\max\{1,|\alpha_i|\}\max\{1,|\alpha_j|\},\quad\text{for all $i,j$}.
\end{equation*}
Then, with our notation $\alpha=\alpha_1$, $\beta=\alpha_2$, we get
\begin{equation} \label{eq:rupper}
  |r| 
 \leq  2^{d^2-2} |a_0||a_d|^{2d-1} {|\alpha+\beta|^2} 
    \prod_{1\leq i\leq d} \max\{1,|\alpha_i|\}^{2d-2}.
\end{equation}
This last quantity is related to the height of $P$ thanks to the following.
\begin{lemma}[Landau's inequality]
The \emph{Mahler measure} $\mo(P)$, defined as
\[ \mo(P):=|a_d|\prod_{i=1}^{d}\max\{1,|\alpha_i|\}, \]
is bounded by:
\begin{equation} \label{eq:mahlermeasure}
\mo(P)\leq \sqrt{a_0^2+\dots+a_d^2}\le \sqrt{d+1}\ho(P).
\end{equation}
\end{lemma}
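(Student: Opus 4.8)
The second inequality in \eqref{eq:mahlermeasure} is immediate: each coefficient satisfies $|a_k|\le\ho(P)$, so $a_0^2+\dots+a_d^2\le(d+1)\ho(P)^2$. The substance is therefore the first inequality, $\mo(P)\le\sqrt{a_0^2+\dots+a_d^2}$, and my plan is to pass to the unit circle and use Parseval's identity, which reads
\[ \frac{1}{2\pi}\int_0^{2\pi}\big|P(e^{i\theta})\big|^2\,d\theta=\sum_{k=0}^{d}|a_k|^2. \]
The right-hand side is exactly the square of the norm appearing in the statement, so it suffices to exhibit a degree-$d$ polynomial that has the same modulus as $P$ everywhere on the unit circle (hence the same $L^2$-norm, hence the same sum of squared coefficient moduli) but whose leading coefficient has modulus precisely $\mo(P)$.

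To this end I would form the \emph{flipped} polynomial
\[ Q(x)=a_d\prod_{|\alpha_i|\le 1}(x-\alpha_i)\prod_{|\alpha_i|>1}(\overline{\alpha_i}\,x-1), \]
in which each factor corresponding to a root \emph{outside} the closed unit disk is replaced by its Blaschke-type reflection, while the factors for roots inside are left untouched. The key computation is that $\big|e^{i\theta}-\alpha\big|=\big|\overline{\alpha}\,e^{i\theta}-1\big|$ for every $\alpha\in\C$ and every $\theta$, which drops out of expanding both squared moduli. Consequently $|Q(e^{i\theta})|=|P(e^{i\theta})|$ for all $\theta$, so $Q$ and $P$ share the same $L^2$-norm on the circle and, by Parseval applied to $Q$, the same sum of squared coefficient moduli, namely $a_0^2+\dots+a_d^2$.

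It then remains to read off the leading coefficient of $Q$, which is $a_d\prod_{|\alpha_i|>1}\overline{\alpha_i}$; its modulus is $|a_d|\prod_{|\alpha_i|>1}|\alpha_i|=\mo(P)$, since the factors with $|\alpha_i|\le 1$ contribute $1$ to the product $\prod_i\max\{1,|\alpha_i|\}$. Because the modulus of any single coefficient of a polynomial is bounded by its $L^2$-norm on the circle, I conclude $\mo(P)\le\sqrt{a_0^2+\dots+a_d^2}$. The only genuine idea here is the modulus-preserving flip coupled with Parseval; the main obstacle is simply recognizing that one must reflect exactly the outer roots and leave the inner ones alone, so that the leading coefficient of $Q$ captures the Mahler measure on the nose.
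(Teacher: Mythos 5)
Your proof is correct and is essentially the paper's own argument (the classical proof of Landau's inequality): reflect the roots outside the unit circle via $(x-\alpha)\mapsto(\overline{\alpha}\,x-1)$, preserve the $L^2$-norm on the circle, and read off $\mo(P)$ as the modulus of the leading coefficient, bounded by Parseval. The only cosmetic difference is that you perform all the flips at once and justify norm-preservation by the pointwise identity $|e^{i\theta}-\alpha|=|\overline{\alpha}\,e^{i\theta}-1|$, whereas the paper's sketch flips one root at a time and checks that each flip preserves the $L_2$-norm and the Mahler measure.
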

\begin{proof}[Sketch of proof]
See, for instance, \cite{B04,M92}. If all the roots have modulus smaller than~1, then the property is obvious. If $P(\alpha)=0$ with~$|\alpha|>1$ then the polynomial $(1-\alpha x)P(x)/(x-\alpha)$ can be checked to have the same Mahler measure and  
the same $L_2$-norm as $P$.   
This construction is repeated till no root has modulus larger than~1.
\end{proof}
When no $\alpha_i+\alpha_j$ is~0, then Landau's bound~\eqref{eq:mahlermeasure} together with~\eqref{eq:rupper} and~\eqref{eq:rlower} give
\[  |\alpha+\beta|\geq 2^{(-d^2+2)/2}(d+1)^{(-d+1)/2}\ho(P)^{-d+1},  \]
which finishes the proof of the first part of Theorem~\ref{thm:bound}.

\section{The case of reducible polynomials}
It turns out that the exponent of $\ho(P)$ can be improved for irrational roots of reducible polynomials.
\begin{theorem} \label{thm:reducible}
Let $P(x)\in\Z[x]$ be a polynomial of degree~$d$ reducible over $\Q$. Then for any two irrational real roots $\alpha,\beta$ of $P(x)$, either $|\alpha|=|\beta|$ or 
\begin{equation} \label{eq:reducible}
  \big| |\alpha|-|\beta| \big| \ge 2^{-3d^2/2+3d-1} d^{(-d+2)/2} \ho(P)^{-d+2}.
\end{equation}
\end{theorem}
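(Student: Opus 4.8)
The plan is to exploit reducibility to factor $P$ and bound $\ho$ of the factors, so that the $-d+1$ exponent of Theorem~\ref{thm:bound} improves to $-d+2$ once we apply the main bound to a proper factor rather than to $P$ itself. First I would dispose of the easy case: by Mahler's inequality \eqref{eq:mahler}, if $\alpha$ and $\beta$ have the same sign then already $|\alpha-\beta|\gg\ho(P)^{-d+1}$, and one should check whether that exponent can be sharpened to $-d+2$ here; the genuinely new work concerns the quantity $|\alpha+\beta|$, i.e.\ the case where $\alpha$ and $\beta$ have opposite signs, which is exactly where the resultant $\res(P(x),P(-x))$ of Section~\ref{sec2} was used.

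The core idea is to separate the problem according to whether $\alpha$ and $\beta$ are roots of the \emph{same} irreducible factor of $P$ or of two \emph{different} irreducible factors. Write $P = c\,Q_1\cdots Q_k$ with the $Q_i\in\Z[x]$ irreducible and primitive. If $\alpha,\beta$ are roots of distinct factors $Q_m,Q_n$, then $|\alpha+\beta|$ is controlled by the resultant $\res(Q_m(x),Q_n(-x))$, which is a \emph{nonzero integer} (nonzero because $\alpha+\beta\ne0$ for irrational roots forces $-\beta$ not to be a root of $Q_m$, since $-\beta$ and $\beta$ are conjugate only in degenerate situations one must rule out), hence has absolute value at least~$1$. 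This removes the factor $|a_0a_d|$ loss and, crucially, replaces the degree-$d$ exponent arithmetic by one governed by $\deg Q_m + \deg Q_n\le d$; rerunning the Poisson-formula and Landau-inequality estimates of Section~\ref{sec2} on $Q_m,Q_n$ in place of $P$ should yield the improved exponent. If instead $\alpha,\beta$ lie in the same irreducible factor $Q$ of degree $e$, then one applies the first part of Theorem~\ref{thm:bound} to $Q$ directly; since $e\le d-1$ (as $P$ is reducible, no irreducible factor has full degree unless $P$ is a unit times an irreducible, contradicting reducibility), the exponent $-e+1\ge -d+2$ already gives what we want, provided we relate $\ho(Q)$ back to $\ho(P)$.

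The two technical pillars are therefore: (i) a bound of the form $\ho(Q_i)\le C_d\,\ho(P)$ for each factor, which follows from comparing Mahler measures via $\mo(Q_i)\le\mo(P)$ (multiplicativity of the Mahler measure under products) together with Landau's inequality \eqref{eq:mahlermeasure} in both directions, namely $\ho(Q_i)\le 2^{\deg Q_i}\mo(Q_i)\le 2^{d}\mo(P)\le 2^d\sqrt{d+1}\,\ho(P)$; and (ii) the nonvanishing and integrality of the relevant cross-resultant $\res(Q_m(x),Q_n(-x))$. Carrying out (i) and tracking the explicit constants $2^{-3d^2/2+3d-1}d^{(-d+2)/2}$ is bookkeeping, but it is where the stated power of~$2$ and the $d^{(-d+2)/2}$ accumulate, so care is required to keep the constant clean.

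I expect the main obstacle to be justifying nonvanishing of the cross-resultant in the same-sign-excluded regime: one must argue that when $\alpha$ and $\beta$ are irrational and $\alpha+\beta\ne0$, the polynomial $Q_m(x)$ and $Q_n(-x)$ share no common root, equivalently that $-\beta$ is not a root of $Q_m$. When $Q_m\ne Q_n$ this is automatic unless $Q_m(x)$ and $Q_n(-x)$ happen to coincide up to sign, a possibility that must be excluded case-by-case (it would force $|\alpha|=|\beta|$, which is the excluded alternative). The subtle point is that irrationality of the roots is what lets us rule out $Q_m = \pm Q_n(-x)$ without $|\alpha|=|\beta|$, so the hypothesis that $\alpha,\beta$ are irrational is used precisely here, and I would isolate this as the key lemma before assembling the final estimate.
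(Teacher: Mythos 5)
Your architecture --- split by whether $\alpha$ and $\beta$ lie in the same irreducible factor, use the cross-resultant $\res(Q_\alpha(x),Q_\beta(-x))$ for distinct factors, and transfer heights between $P$ and its factors --- is the paper's, but the statement you isolate as your ``key lemma'' is false, and its failure breaks your distinct-factor case. Take $Q_\alpha(x)=x^2-3x+1$ and $Q_\beta(x)=x^2+3x+1=Q_\alpha(-x)$, with $\alpha=(3+\sqrt{5})/2$ and $\beta=(\sqrt{5}-3)/2$. Both roots are irrational, they have opposite signs, and $|\alpha|=(3+\sqrt{5})/2\neq(3-\sqrt{5})/2=|\beta|$; yet $\res(Q_\alpha(x),Q_\beta(-x))=\res(Q_\alpha,Q_\alpha)=0$. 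So the coincidence $Q_\beta(-x)=\pm Q_\alpha(x)$ neither forces $|\alpha|=|\beta|$ nor is excluded by irrationality: it only means that $-\beta$ is \emph{some} conjugate of $\alpha$, possibly distinct from $\pm\alpha$. The paper's case split is designed exactly to absorb this: it asks whether $\alpha$ and \emph{either one of} $\pm\beta$ are roots of the same irreducible factor $Q$. In the degenerate configuration above, $-\beta$ is conjugate to $\alpha$, so $\bigl||\alpha|-|\beta|\bigr|=|\alpha-(-\beta)|$ is an ordinary root separation inside a single factor $Q$ of degree at most $d-1$, and Mahler's inequality \eqref{eq:mahler} applied to $Q$ (with $\ho(Q)\le 2^d\ho(P)$ from Gelfond's lemma) already yields the exponent $-d+2$; in the remaining case, nonvanishing of $\res(Q_\alpha(x),Q_\beta(\pm x))$ is automatic, since a common root would put $\mp\beta$ among the roots of $Q_\alpha$.

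Two further gaps. First, in your same-factor case you invoke Theorem~\ref{thm:bound} for $Q$ without verifying its hypothesis that no two roots sum to zero; the paper checks it: if $Q(x)$ and $Q(-x)$ have a common root, then their gcd is nontrivial and irreducibility forces $Q$ to be even, whence $\pm\beta$ are both roots of $Q$ and Mahler's bound \eqref{eq:mahler} covers $|\alpha-\beta|$ and $|\alpha+\beta|$ simultaneously, as in \eqref{eq:evenpoly}. Second, you misidentify where irrationality is used: not for nonvanishing of the resultant (see above), but in the degree bookkeeping that produces the exponent. Writing $a=\deg Q_\alpha$ and $b=\deg Q_\beta$, the paper bounds $\mo(Q_\alpha)^b\mo(Q_\beta)^a\le\mo(Q_\alpha Q_\beta)^{\max\{a,b\}}\le\mo(P)^{\max\{a,b\}}$, and irrationality of \emph{both} roots gives $a,b\ge2$, hence $\max\{a,b\}\le d-2$: this is the source of $\ho(P)^{-d+2}$. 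If $\beta$ were rational ($b=1$), then $\max\{a,b\}$ could equal $d-1$ and the argument would only return the exponent $-d+1$. Your height-transfer step (i) is correct --- it essentially reproves the Gelfond-type inequality the paper cites --- so with the corrected case split and the evenness check, your plan does assemble into the paper's proof.
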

\begin{proof} We assume $|\alpha|\neq|\beta|$. Any irreducible factor~$Q$ of $P$ has degree at most $d-1$ and Gelfond's lemma (see e.g., \cite[Lemma A.3]{B04}) implies $\ho(Q)\leq 2^d \ho(P)$. 

We distinguish two cases depending on whether $\alpha$ and $\pm\beta$ are roots of the same irreducible factor of~$P$ or not. When $\alpha$ and either one of $\pm\beta$ are roots of the same irreducible factor~$Q$, Theorem~\ref{thm:bound} applies to~$Q$ if moreover it has no pair of opposite roots. Then, we get
\begin{equation} \label{eq:notevenpoly}
\begin{aligned}
    \big| |\alpha|-|\beta| \big| &\ge 2^{(-d^2+2d+1)/2} d^{(-d+2)/2} \ho(Q)^{-d+2}\\
    &\ge 2^{-3d^2/2+3d+1/2} d^{(-d+2)/2} \ho(P)^{-d+2}.
\end{aligned}
\end{equation}
If two roots of $Q$ sum to~0, the gcd of $Q(x)$ and $Q(-x)$ is nontrivial and since $Q$ is irreducible, this means that $Q(x)$ is even, so that $\pm\beta$ are both roots of $Q$ and Mahler's bound applies both to~$|\alpha-\beta|$ and $|\alpha+\beta|$, leading to
\begin{equation} \label{eq:evenpoly}
  \big| |\alpha|-|\beta| \big| = |\alpha - (\pm \beta)| > \sqrt{3} d^{-(2d-1)/2} \ho(Q)^{-d+2},
\end{equation} 
which leads to the result since $\sqrt{3}d^{-(2d-1)/2}>2^{(-d^2+2d-2)/2}d^{(-d+2)/2}$  for $d\ge0$.

Otherwise, if $\alpha$ and $\pm\beta$ are not roots of the same irreducible factor of~$P$,
let $Q_{\alpha}(x)$ and $Q_{\beta}(x)$ be minimal polynomials of $\alpha$ and $\beta$ over $\Z$ with respective degrees $a$ and $b$.
Arguing in a similar manner as in the proof of Theorem \ref{thm:bound},
we deduce
\[1\le|\res(Q_{\alpha}(x),Q_{\beta}(\pm x))|
\leq 2^{ab} \mo(Q_{\alpha})^{b} \mo(Q_{\beta})^{a} |\alpha\pm\beta|.
\]
Gauss's lemma and multiplicativity of Mahler's measure give
$ \mo(Q_{\alpha}Q_{\beta}) \leq \mo(P)$. Furthermore, 
the condition that $\alpha$ and $\beta$ are not rational gives $a,b\in [2,d-2]$, so that
\[ \mo(Q_{\alpha})^{b} \mo(Q_{\beta})^{a} \leq \mo(Q_{\alpha}Q_{\beta})^{\max\{a,b\}}
  \leq \mo(P)^{d-2} \leq (d+1)^{(d-2)/2} \ho(P)^{d-2} \]
and finally $2^{ab}\leq 2^{d^2/4}$, leading to
\[  \big| |\alpha|-|\beta| \big| \geq 2^{-d^2/4} (d+1)^{(-d+2)/2} \ho(P)^{-d+2} \]
which is stronger than \eqref{eq:reducible} since $\alpha,\beta$ being both irrational forces $d\ge4$.
\end{proof}

\section{Families with small absolute root separation}
We now exhibit families of polynomials that reach the exponent~$-d+1$ of Theorem~\ref{thm:bound}, thereby concluding its proof. The construction starts from~$Mx^2-1$ which has two real roots $\pm1/\sqrt{M}$. For $M$ sufficiently large, these are small so that higher powers of the roots will be even smaller. By perturbing $Mx^2-1$ in appropriate ways we thus get polynomials with roots very close to~$\pm1/\sqrt{M}$ and whose sum is minute.
\begin{theorem} \label{thm:examples}
Let $d \ge 2$ be an integer and $M$ be a positive integer.
Consider the polynomials $P_{d,M}(x)$ of degree $d$ and height $M$  
defined by:
\begin{equation*}
P_{d,M}(x) =
 \begin{cases} 
   Mx^2-x-1 & \text{if } d=2; \\
   x^3+Mx^2-1 &\text{if } d=3; \\
   x^d-(Mx^2-1)(1-x^{d-3}) &\text{if } d\geq 4 \text{ even;} \\
   x^d -(M-1)x^{d-1} +x^{d-3} -Mx^3 -Mx^2 +x +1 &\text{if } d\geq 5 \text{ odd.}
 \end{cases}
\end{equation*}
If $M$ is sufficiently large in terms of $d$, 
then the polynomial $P_{d,M}(x)$ has two roots $\alpha,\beta\in\R$ such that $||\alpha|-|\beta||=|\alpha+\beta|$ and
\begin{equation} \label{eq:polybound}
  0 < |\alpha+\beta| \ll \ho(P_{d,M})^{-d+1}.
\end{equation}
For $d\geq 3$, these polynomials are monic and irreducible over the field of rational numbers.
\end{theorem}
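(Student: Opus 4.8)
The plan is to verify the three assertions of Theorem~\ref{thm:examples} in turn: that $P_{d,M}$ has height $M$ and degree $d$, that it possesses two real roots $\alpha,\beta$ with $\alpha+\beta$ minute, and finally that for $d\ge 3$ these polynomials are monic and irreducible. The height and degree claims are immediate by inspection of each case, so the substance lies in locating the two near-opposite roots and in the irreducibility argument.

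\medskip

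For the root analysis I would treat each case as a perturbation of $Mx^2-1$, whose roots are $\pm M^{-1/2}$. The guiding heuristic is that for $M$ large, $M^{-1/2}$ is small, so any term $x^k$ with $k\ge 3$ is negligible compared to the quadratic part near these points. Concretely, I would show that $P_{d,M}$ changes sign on a short interval around $+M^{-1/2}$ and on another short interval around $-M^{-1/2}$, thereby producing two real roots $\alpha,\beta$ by the intermediate value theorem. To control $\alpha+\beta$, the cleanest route is to exploit symmetry defects: writing $P(x)=E(x)+O(x)$ as its even and odd parts, one has $|\alpha+\beta|$ governed by the odd part evaluated near the roots. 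For instance, in the even case $d\ge 4$ one checks that $P_{d,M}(x)=Mx^2-1-x^{d-3}(Mx^2-1)+x^d$, whose deviation from an even polynomial is of size $O(M^{-(d-3)/2})$ near $x=\pm M^{-1/2}$; propagating this through the quadratic slope $\sim 2M\cdot M^{-1/2}=2M^{1/2}$ yields $|\alpha+\beta|\ll M^{-(d-1)/2}$. Since $\ho(P_{d,M})=M$, this is exactly $\ho(P_{d,M})^{-d+1}$ after matching the exponent $(d-1)$ to the claimed $d-1$; one must be careful that the relation $||\alpha|-|\beta||=|\alpha+\beta|$ holds, which follows because $\alpha>0>\beta$ forces $||\alpha|-|\beta||=|\,|\alpha|-|\beta|\,|=|\alpha+\beta|$.

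\medskip

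For the monicity and irreducibility claim when $d\ge 3$, monicity is visible in each case. For irreducibility I would argue that any nontrivial factorization over $\Q$ must, by Gauss's lemma, be a factorization over $\Z$ into monic integer polynomials; then I would extract a contradiction from the large coefficient $M$. The natural tool is to examine the factorization modulo a suitable prime or to use the Newton polygon / a Perron-type irreducibility criterion exploiting that one coefficient dominates. Reducing modulo a prime dividing $M$ collapses the polynomial to something close to $x^d$ or $x^d\pm 1$, constraining the shapes of possible factors; comparing with the reduction modulo a prime not dividing $M$ should pin down the factorization enough to rule it out. Alternatively, one can use the explicit small-root structure: a genuine factor would have to account for the pair of tiny near-opposite roots, and tracking the product of the remaining roots against the constant term $\pm 1$ forces a contradiction.

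\medskip

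The main obstacle I expect is the irreducibility proof, since the four cases have genuinely different shapes and no single off-the-shelf criterion (Eisenstein, Perron) applies uniformly. The odd case $d\ge 5$, with its many nonzero coefficients, looks the most delicate: here I anticipate needing either a careful case analysis of possible degrees of factors combined with coefficient bounds on the roots, or a clever substitution that brings the polynomial into a form where a reduction-modulo-$p$ argument becomes transparent. Establishing the asymptotic $|\alpha+\beta|\ll M^{-(d-1)/2}$ with the correct implied constant is comparatively routine once the perturbation framework is set up, but the bookkeeping of which terms are negligible near $\pm M^{-1/2}$ must be done separately for each of the four cases.
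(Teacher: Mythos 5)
Your perturbation framework---IVT sign changes near $\pm M^{-1/2}$ together with an even/odd splitting---is the right skeleton and matches the paper's method (direct asymptotic expansions of the roots, verified by sign changes on short intervals), but your quantitative estimate loses half the exponent, and the loss is then papered over. For $d\ge 4$ even, the odd part of $P_{d,M}$ is exactly $x^{d-3}(Mx^2-1)$ (note also a sign slip in your rewrite: $P_{d,M}=x^d-(Mx^2-1)(1-x^{d-3})$, not $x^d+(Mx^2-1)(1-x^{d-3})$). Your bound $\OO(M^{-(d-3)/2})$ for this odd part treats the factor $Mx^2-1$ as $\OO(1)$; dividing by the slope $2M^{1/2}$ then gives only $|\alpha+\beta|\ll M^{-(d-2)/2}$ (you wrote $M^{-(d-1)/2}$), and your final step ``matching the exponent'' silently equates $M^{-(d-1)/2}$ with $M^{-(d-1)}=\ho(P_{d,M})^{-d+1}$, which is false: for $d=4$ your sketch yields $M^{-3/2}$ where $M^{-3}$ is required by \eqref{eq:polybound}. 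The missing observation is that \emph{at a root} one has $Mx^2-1=x^d/(1-x^{d-3})=\OO(M^{-d/2})$, so the odd part evaluated at the root is $\OO(M^{-(2d-3)/2})$, and only then does the slope argument give $|\alpha+\beta|\ll M^{-d+1}$. Equivalently, the paper expands the roots to second order, e.g.\ $\beta=M^{-1/2}+\tfrac12 M^{-(d+1)/2}+\tfrac12 M^{-d+1}+\OO(M^{-(2d+1)/2})$: the first-order corrections are antisymmetric and cancel in $\alpha+\beta$, leaving $\alpha+\beta=M^{-d+1}+\OO(M^{-(2d+1)/2})$---which also establishes the strict positivity $|\alpha+\beta|>0$ demanded by \eqref{eq:polybound}, a point your sketch never addresses.

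The second and larger gap is irreducibility, which you yourself flag as the main obstacle and leave unresolved; none of your proposed tools goes through readily (modulo a prime $p\mid M$ the even-case polynomial reduces to $x^d-x^{d-3}+1$, which has no standard irreducible shape; Eisenstein is unavailable with constant term $\pm1$; no Perron-type dominance holds in the odd case). The paper sidesteps all of this with a short self-referential argument that your proposal misses: $P_{d,M}$ is monic with constant term $\pm1$, so any rational root would be $\pm1$, and one checks $P_{d,M}(\pm1)\neq0$; hence $\alpha$ and $\beta$ are irrational. If $P_{d,M}$ were reducible over $\Q$ (hence over $\Z$ by Gauss's lemma), Theorem~\ref{thm:reducible} would force $\big||\alpha|-|\beta|\big|\gg\ho(P_{d,M})^{-d+2}$, contradicting $|\alpha+\beta|\ll\ho(P_{d,M})^{-d+1}$ once $M$ is large in terms of $d$. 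Your alternative idea---tracking the tiny near-opposite roots against the constant term of a putative factor---gestures in this direction, but without a lower bound of the type of Theorem~\ref{thm:reducible} it has no mechanism to produce a contradiction, so as written it is not a proof.
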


Note that for any odd integer $d\geq 5$ the polynomial $x^{d-1}-(Mx^2-1)(1-x^{d-2})$ has also two real roots satisfying \eqref{eq:polybound}.
However, $P_{d,M}$ has the extra advantage of being monic.

\begin{proof}[Proof of Theorem \ref{thm:examples}] The case of quadratic polynomials is trivial. The only thing left to discuss for $d=2$ is monicity and irreducibility. With monic quadratic polynomials, we can only achieve the exponent $0$ in \eqref{eq:polybound}. The given polynomial is irreducible if and only if $4M+1$ is not a square,    
which happens for infinitely many $M$. Therefore, from now on, we take $d\geq 3$.

Direct asymptotic computations give roots $\alpha$ and $\beta$ of~$P_{d,M}$ such that \eqref{eq:polybound} holds. For instance, 
the polynomial $P_{3,M}$ has two roots $\alpha_3, \beta_3$ satisfying 
\begin{align*}
  \alpha_3 &= -M^{-1/2} -\frac{1}{2}M^{-2} +\OO(M^{-7/2}), \\ 
  \beta_3 &= M^{-1/2} -\frac{1}{2}M^{-2} +\OO(M^{-7/2}). 
\end{align*}
The conclusion of the proof is computational too and consists in giving small enough intervals where the polynomial $P_{d,M}(x)$ changes sign. For the first root for instance,
\[ \begin{aligned}
  P_{3,M}(-M^{-1/2}-\frac{1}{2}M^{-2}-M^{-3}) &= 2M^{-5/2} +\OO(M^{-3}) >0, \\
  P_{3,M}(-M^{-1/2}-\frac{1}{2}M^{-2}) &= -\frac{5}{4}M^{-3} +\OO(M^{-4}) <0,
\end{aligned} \]
for $M$ large enough and similarly for the other root. Of course, the lengths of the intervals can be adjusted to the required precision.

Correspondingly, for $d\geq 4$ even, $P_{d,M}(x)$ has roots
\[ \begin{aligned}
  \alpha &= -M^{-1/2} -\frac{1}{2}M^{-(d+1)/2} +\frac{1}{2}M^{-d+1}+\OO(M^{-(2d+1)/2}),\\ 
  \beta &= M^{-1/2} +\frac{1}{2}M^{-(d+1)/2} +\frac{1}{2}M^{-d+1} +\OO(M^{-(2d+1)/2}).\\ 
\end{aligned} \]
For $d\geq 5$ odd, $P_{d,M}(x)$ has roots
\[ \begin{aligned}
  \alpha &= -M^{-1/2} -\frac{1}{2}M^{-d/2} +\frac{1}{2}M^{-(2d-3)/2} +\frac{1}{2}M^{-d+1} + \OO(M^{-(2d-1)/2}),\\ 
  \beta &= M^{-1/2} +\frac{1}{2}M^{-d/2} -\frac{1}{2}M^{-(2d-3)/2} +\frac{1}{2}M^{-d+1} 
   + \OO(M^{-(2d-1)/2}).
\end{aligned} \]
Inequalities showing that actual roots with these expansions exist are obtained by straightforward computations.

For $d \ge 3$ and $M \ge 3$, the leading coefficient of the polynomial $P_{d,M}(x)$ is $1$ and its 
constant coefficient is $\pm 1$. It is easily checked that $1$ and $-1$ are not roots of $P_{d,M}(x)$, thus this polynomial has no rational roots. Gauss' lemma together with Theorem \ref{thm:reducible} then implies that $P_{d,M}(x)$ is irreducible for $M$ large enough in terms of $d$.
\end{proof}

\section*{Acknowledgements}
A.~D. and T.~P. were supported by the Croatian Science Foundation under the project no.~6422. A.~D. acknowledges support from the QuantiXLie Center of Excellence. The work of B.~S. was supported in part by FastRelax ANR-14-CE25-0018-01.

\bigskip

{\small \noindent
Yann Bugeaud \\
Universit\'{e} de Strasbourg \\ 
Math\'{e}matiques \\ 
7, rue Ren\'{e} Descartes \\
F-67084 Strasbourg cedex, France \\
{\em E-mail address}: {\tt bugeaud@math.unistra.fr}}

\bigskip 

{\small \noindent
Andrej Dujella \\
Department of Mathematics \\ University of
Zagreb
\\ Bijeni\v{c}ka cesta 30 \\
10000 Zagreb, Croatia \\
{\em E-mail address}: {\tt duje@math.hr}}

\bigskip

{\small \noindent
Tomislav Pejkovi\'c \\
Department of Mathematics \\ University of
Zagreb
\\ Bijeni\v{c}ka cesta 30 \\
10000 Zagreb, Croatia \\
{\em E-mail address}: {\tt pejkovic@math.hr}}

\bigskip

{\small \noindent
Bruno Salvy \\
INRIA and LIP -- ENS Lyon \\
46, all\'{e}e d'Italie\\
69364 Lyon Cedex 07, France \\
{\em E-mail address}: {\tt Bruno.Salvy@inria.fr}}


\begin{thebibliography}{99}

\bibitem{B04} Y. Bugeaud, Approximation by algebraic numbers. Cambridge Tracts
in Mathematics, Cambridge, 2004.

\bibitem{BD11} Y. Bugeaud and A. Dujella, \textit{Root separation for irreducible integer
polynomials}, Bull. Lond. Math. Soc. {\bf 43} (2011), 1239--1244.

\bibitem {BD14} Y. Bugeaud and A. Dujella, \textit{Root separation for reducible integer polynomials},
Acta Arith. {\bf 162} (2014), 393--403.

\bibitem{BM10} Y. Bugeaud and M. Mignotte, \textit{Polynomial root separation}, Intern. J.
Number Theory {\bf 6} (2010), 587--602.

\bibitem{DP11} A. Dujella and T. Pejkovi\'{c}, \textit{Root separation for reducible monic quartics}, Rend. Semin. Mat. Univ. Padova {\bf 126} (2011), 63--72. 
    
\bibitem{E04} J.-H. Evertse, \textit{Distances between the conjugates of an algebraic number},
Publ. Math. Debrecen {\bf 65} (2004), 323--340.

\bibitem{GS96} X. Gourdon and B. Salvy, \textit{Effective asymptotics of linear recurrences with rational coefficients}, Discrete Math. {\bf 153} (1996), 145--163.

\bibitem{L02} S. Lang, Algebra. Revised third edition. Graduate Texts in Mathematics, 211. Springer-Verlag, New York, 2002.

\bibitem{M64} K. Mahler, \textit{An inequality for the discriminant of a polynomial}, Michigan
Math. J. {\bf 11} (1964), 257--262.

\bibitem{M92} M. Mignotte, Mathematics for computer algebra. Springer-Verlag, New York, 1992.

\bibitem{S06} A. Sch\"onhage, \textit{Polynomial root separation examples}, J. Symbolic
Comput. {\bf 41} (2006), 1080--1090.

\end{thebibliography}
\end{document}